\documentclass[reqno, a4paper, 10pt]{amsart}
\usepackage{amssymb}
\usepackage{mathrsfs}
\usepackage{amssymb, url, color, graphicx, amscd, mathrsfs}
\usepackage[colorlinks=true, bookmarks=true, pdfstartview=FitH, pagebackref=true, linktocpage=true, linkcolor = magenta, citecolor = blue]{hyperref}
\usepackage{graphicx}
\usepackage{times}


\newtheorem{theorem}{Theorem}[section]
\newtheorem{lemma}[theorem]{Lemma}
\newtheorem{corollary}[theorem]{Corollary}

\theoremstyle{definition}

\newtheorem{example}[theorem]{Example}

\theoremstyle{remark}
\newtheorem{remark}[theorem]{Remark}

\numberwithin{equation}{section}

\DeclareMathOperator{\Ric}{Ric}

\parskip=4pt

\allowdisplaybreaks

\begin{document}

\title[Gradient estimate]{Gradient estimates and Liouville type theorems for Poisson equations}

\def\cfac#1{\ifmmode\setbox7\hbox{$\accent"5E#1$}\else\setbox7\hbox{\accent"5E#1}\penalty 10000\relax\fi\raise 1\ht7\hbox{\lower1.0ex\hbox to 1\wd7{\hss\accent"13\hss}}\penalty 10000\hskip-1\wd7\penalty 10000\box7 }

\author[N.T. Dung]{Nguyen Thac Dung}
\address[N.T. Dung]{Department of mathematics, Hanoi University of Science (VNU), Ha N\^{o}i, Vi\^{e}t Nam}
\email{\href{mailto: N.T. Dung <dungmath@gmail.com>}{dungmath@gmail.com}}

\author[N.N. Khanh]{Nguyen Ngoc Khanh}
\address[N.N. Khanh]{Department of mathematics, University of Jyv\"{a}skyl\"{a} - Jyv\"{a}skyl\"{a}, Finland.}
\email{\href{mailto: N.N. Khanh <khanh.mimhus@gmail.com>}{khanh.mimhus@gmail.com}}


\begin{abstract}
In this paper, we will address to the following parabolic equation
$$
u_t=\Delta_fu + F(u)
$$
on a smooth metric measure space with Bakry-\'{E}mery curvature bounded from below. Here $F$ is a differentiable function defined in $\mathbb{R}$. Our motivation is originally inspired by gradient estimates of Allen-Cahn and Fisher equations (\cite{Bai17, CLPW17}). In this paper, we show new gradient estimates for these equations. As their applications, we obtain Liouville type theorems for positive or bounded solutions to the above equation when either $F=cu(1-u)$ (the Fisher equation) or; $F=-u^3+u$ (the Allen-Cahn equation); or $F=au\log u$ (the equation involving gradient Ricci solitons).
\end{abstract}


\keywords{Gradient estimates, Bakry-\'{E}mery curvature, Complete smooth metric measure space, Harnack-type inequalities, Liouville-type theorems.}

\maketitle

\section{Introduction}
Let $(M, g, e^{-f}dv)$ be a smooth metric measure space, namely $(M, g)$ is a Riemnnian manifold with metric tensor $g$, $f$ is a smooth function on $M$ and $dv$ is the volume element with respect to $g$. On $M$, the weighted Laplacian is defined by
$$\Delta_f\cdot:=\Delta-\left\langle\nabla f, \nabla\cdot\right\rangle$$
and the Bakry-\'Emery is given by
$$Ric_f:=Ric+Hess f,$$
where $Ric$ is the Ricci tensor on $M$. 
We consider the following general nonlinear parabolic equation
\begin{equation}\label{general}
u_t=\Delta_f u+F(u).
\end{equation}
Here $F$ is differential function in $\mathbb{R}$ and $u$ is assumed to be smooth. Depending on the given function $F$, the equation \eqref{general} describes several important physical and mathematical phenonmenons, such as the equation of gradient Ricci solitons, Yamabe equations, Lichnerowic equation, ect. We refer the reader to \cite{Bai17, DKN18, RR95, SZ06, Wu15} for further discussion.
If $u$ is a standing solution to \eqref{general} that is $u_t=0$, then we have $\Delta_fu+F(u)=0$. This equation is called Poisson equation. When $F(u)=-cu^2+cu, c$ is a positive constant ($F(u)=-u^3+u$), the equation is said to be Fisher equation or Fisher-KKP equation (Allen-Cahn equation), respectively. Originally, the Allen-Cahn equation appeared in the study of the process of phase separation in iron alloys, including order-disorder transitions (see \cite{CCK15, Bai17}. It is also related to the investigate of minimal surface, therefore it is an interesting topic for differential geometry too (see \cite{dPKW13, Pac12} and the references therein). The equation $\Delta u-cu^2+cu=0, c\in\mathbb{R}^+$ was proposed by R. A. Fisher in 1937 to describe the propagation of an evolutionarily advantageous gene in a population \cite{fisher}, and was also independently described in a seminal paper by A. N. Kolmogorov, I. G. Petrovskii, and N. S. Piskunov in the same year \cite{Kol}; for this reason, it is often referred to in the literature as the Fisher-KPP equation. We refer the reader to \cite{CLPW17, fisher, Kol} for further details and additional references therein.

In this paper, motivated by gradient estimates for heat equation given in \cite{Bai17, CLPW17, DKN18, LY86, SZ06, Wu15}, we will study gradient estimates for the general parabolic equation \eqref{general}. Our main purpose is to derive a new gradient estimate for \eqref{general} then use is to show Liouville properties for Fisher and Allen-Cahn equation. The main theorem in our setting is as follows.
\begin{theorem}\label{thmGE-II}
Let $(M^n, g, e^{-f}dv)$ be a smooth metric measure space with $\Ric_f\geqslant -(n-1)K$. Suppose that $u$ is a smooth solution of equation
$$u_t=\Delta_fu+F(u)$$ 
on $Q_{R, T}:=B(x_0,R)\times[t_0-T,t_0]\subset M\times[0,\infty)$. If $\varepsilon,m,M$ are fixed constants such that $0<\varepsilon<1,0<m\leqslant u\leqslant M$ in $Q_{R, T}$ then there exists a constant $c$ depending only on $n$ such that 
\begin{equation}
\varepsilon u^{\varepsilon-1}|\nabla u|\leq c(n)M^\varepsilon\sqrt{\frac{\varepsilon}{1-\varepsilon}}\left (\frac{1}{R}+\sqrt{\frac{1-\varepsilon}{\varepsilon}}\frac{1}{R}+\frac{\sqrt{\alpha}}{\sqrt{R}}+\frac{1}{\sqrt{t-t_0+T}}+\sqrt{\max\limits_{Q_{R, T}}\{(n-1)K+H, 0\}}\right )\label{theorem1}
\end{equation}
in $Q_{R/2,T}$ with $t\neq t_0-T$, and
$$
H= (\varepsilon-1)F/u+F'
$$
and $\alpha:=\max_{x\in B(x_0,1)}\Delta_fr(x).$
\label{theorem23}
\end{theorem}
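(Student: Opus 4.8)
The plan is to run a localized Li--Yau/Souplet--Zhang argument on the auxiliary equation obtained from the substitution $v:=u^{\varepsilon}$. Since $0<\varepsilon<1$ and $0<m\le u\le M$ on $Q_{R,T}$, the function $v$ is smooth, positive and bounded above by $M^{\varepsilon}$, and a direct computation from $u_t=\Delta_f u+F(u)$ gives
$$v_t=\Delta_f v+\frac{1-\varepsilon}{\varepsilon}\,\frac{|\nabla v|^2}{v}+\varepsilon\,v\,\frac{F(u)}{u}.$$
The gain is the favorable gradient term $\tfrac{1-\varepsilon}{\varepsilon}\,|\nabla v|^2/v$, which has a \emph{positive} coefficient because $\varepsilon<1$; this is exactly what will later produce a genuine quadratic term in the Bochner inequality for $w:=|\nabla v|^2=\bigl(\varepsilon u^{\varepsilon-1}|\nabla u|\bigr)^2$, a term that is absent for the plain (weighted) heat equation and without which one cannot close the estimate.

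Next I would apply the weighted Bochner formula to $w=|\nabla v|^2$, insert the $v$-equation, and use $\Ric_f\ge-(n-1)K$. The only delicate contribution is $\langle\nabla v,\nabla(v F(u)/u)\rangle$: using $\nabla u=\tfrac{u}{\varepsilon v}\nabla v$ and the chain rule one finds $\nabla\!\bigl(v\,F(u)/u\bigr)=\tfrac1\varepsilon H\,\nabla v$ with precisely $H=(\varepsilon-1)F/u+F'$, and this is where $H$ enters. Collecting everything, bounding $\Ric_f(\nabla v,\nabla v)\ge-(n-1)Kw$ and discarding $|\Hess v|^2\ge0$, I obtain
$$\Delta_f w-w_t\ \ge\ \frac{2(1-\varepsilon)}{\varepsilon}\,\frac{w^2}{v^2}-2\bigl((n-1)K+H\bigr)w-\frac{2(1-\varepsilon)}{\varepsilon}\,\frac{\langle\nabla v,\nabla w\rangle}{v},$$
and since $v\le M^{\varepsilon}$ the first term dominates as $\ge\tfrac{2(1-\varepsilon)}{\varepsilon}\,w^2/M^{2\varepsilon}$.

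The final step is the standard cut-off argument. Let $\psi=\psi(r(x))$ satisfy $\psi\equiv1$ on $B(x_0,R/2)$, $\operatorname{supp}\psi\subset B(x_0,R)$, $0\le\psi\le1$, $|\nabla\psi|^2\le C\psi/R^2$, and $\Delta_f\psi\ge -C\bigl(R^{-2}+\alpha R^{-1}+K\bigr)$, the last bound coming from the weighted Laplacian comparison for $\Ric_f\ge-(n-1)K$ (this is where $\alpha$ and the term $\sqrt\alpha/\sqrt R$ appear; the cut locus of $x_0$ is handled by Calabi's trick). Fix $(x_1,t_1)\in Q_{R/2,T}$ with $t_1\ne t_0-T$, put $\tau:=t-t_0+T$ and $\tau_1:=t_1-t_0+T>0$, and consider $G:=\tau\psi w$ on the truncated cylinder $B(x_0,R)\times[t_0-T,t_1]$ (truncating at $t_1$ is what produces the factor $1/\sqrt{t-t_0+T}$ rather than $1/\sqrt{T}$). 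Since $G$ vanishes at $t=t_0-T$ and on $\partial B(x_0,R)$, and $\bar B(x_0,R)$ is compact by completeness, $G$ attains its maximum at some $(\bar x,\bar t)$ with $\bar x\in B(x_0,R)$ and $\bar t>t_0-T$ (if $G\equiv0$ the conclusion is trivial, so we may assume $w(\bar x,\bar t)>0$). There $\nabla G=0$, $\Delta_f G\le0$, $\partial_t G\ge0$; substituting $\nabla w=-\tfrac{w}{\psi}\nabla\psi$ and the differential inequality, dividing by $\tau w$, absorbing the cross term via $\tfrac{|\langle\nabla v,\nabla\psi\rangle|}{v}\le\tfrac{\sqrt C}{R}\sqrt{\psi w/v^2}$ and Young's inequality into the quadratic term, and using $v^2\le M^{2\varepsilon}$, one is left with a \emph{linear} bound for $(\psi w)(\bar x,\bar t)$ in terms of $M^{2\varepsilon}\tfrac{\varepsilon}{1-\varepsilon}$ times $\max_{Q_{R,T}}\{(n-1)K+H,0\}$, $R^{-2}$, $\varepsilon^{-1}(1-\varepsilon)^{-1}R^{-2}$, $\alpha R^{-1}$, $K$ and $1/\bar\tau$. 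Since $\bar\tau\le\tau_1$, multiplying through by $\bar\tau$, evaluating $G$ back at $(x_1,t_1)$ where $\psi=1$, and dividing by $\tau_1$ gives the desired bound on $w(x_1,t_1)$; taking square roots, using $\sqrt{a+b}\le\sqrt a+\sqrt b$ together with $\sqrt w=\varepsilon u^{\varepsilon-1}|\nabla u|$, and regrouping the $1/R$ contributions produces \eqref{theorem1}.

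I expect the real work to be bookkeeping rather than a single hard idea. Two points deserve care. First, one must use $w=|\nabla v|^2$ itself and not the log-gradient $|\nabla v|^2/v^2$: only the former makes the nonlinearity contribute precisely $H=(\varepsilon-1)F/u+F'$ (the log-gradient would give $F'-F/u$ instead). Second, every $\varepsilon$-dependent constant produced by the Young-type absorptions and by the $v\le M^{\varepsilon}$ step must be tracked so that it packages exactly as the prefactor $M^{\varepsilon}\sqrt{\varepsilon/(1-\varepsilon)}$ together with the two distinct terms $\tfrac1R$ and $\sqrt{\tfrac{1-\varepsilon}{\varepsilon}}\tfrac1R$. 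The only genuinely geometric input is the weighted Laplacian comparison, which supplies the $\alpha$-dependence and, together with the $(n-1)K+H$ term from Bochner, is responsible for the curvature contribution to \eqref{theorem1}.
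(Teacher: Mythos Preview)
Your proposal is correct and follows essentially the same route as the paper: both apply the weighted Bochner formula to $w=|\nabla u^{\varepsilon}|^2$ to obtain the key differential inequality with the favorable quadratic term $\tfrac{2(1-\varepsilon)}{\varepsilon}\,w^{2}/u^{2\varepsilon}$ and the coefficient $H=(\varepsilon-1)F/u+F'$ (the paper records this computation as a separate lemma), then localize, invoke the $f$-Laplacian comparison for the cutoff, and read off the bound at a maximum of the localized quantity. The only visible difference is in the time localization: you use a Li--Yau style weight $G=(t-t_0+T)\,\psi(r)\,w$, whereas the paper employs a Souplet--Zhang space--time cutoff $\bar\psi(r,t)$ with $|\bar\psi_t|\le C(\tau-t_0+T)^{-1}\bar\psi^{1/2}$ and then splits into two cases according to whether the maximum point lies in $B(x_0,R/2)$; both devices are standard and yield the same $1/\sqrt{t-t_0+T}$ contribution and the same final estimate.
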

We think that this kind of gradient estimate has a potential to study many interesting equation such as the equations given in \cite{DKN18, Wu15},.... This estimate can be considered as a gradient estimate of Li-Yau type. It may be of interest to find gradient estimates of Souplet-Zhang type and its applications (\cite{SZ06}) for \eqref{general}. As applications of Theorem \ref{thmGE-II}, we prove several Liouville type theorems. The first one is given on smooth metric measure spaces as belows. 
\begin{corollary}\label{corol1}
Let $(M, g, e^{-f}dv)$ be a smooth metric measure space with $Ric_f\geq0$. For $a\in\mathbb{R}$, consider the heat equation
\begin{equation}\label{grs1}
u_t=\Delta_fu+au\log u.
\end{equation}
Then the following statements hold true.
\begin{enumerate}
\item If $a>0$ and there does not exist positive solution $u(x, t)$  equation \eqref{grs1} such that $u\leq c<e^{-1}$.
\item If $a<0$ and $u$ is a positive solution to equation \eqref{grs1} then $u$ does not exist provided that $0<c\leq u(x, t)\leq D<1$. Moreover if $0<c\leq u$ and $u$ is of polynomial growth, namely $u(x, t)=o(r^{N_1}(x)+t^{N_2})$ for some $N_1, N_2\in \mathbb{N}$ then $u\equiv1$.
\item If $a=0$ and $u$ is a positive ancient solution to \eqref{grs1} then $u$ is constant provided that $u(x, t)=o(r^{N_1}(x)+t^{N_2})$ for some $N_1, N_2\in \mathbb{N}$.
\end{enumerate}
\end{corollary}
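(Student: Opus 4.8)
The plan is to specialize Theorem~\ref{thmGE-II} to $F(u)=au\log u$, choose the free exponent $\varepsilon\in(0,1)$ so that the curvature-type term of the estimate disappears, let the parabolic cylinder exhaust the domain of $u$, and conclude from an elementary ordinary differential equation. For $F(u)=au\log u$ one computes $F'(u)=a(\log u+1)$ and $F(u)/u=a\log u$, so the auxiliary quantity of Theorem~\ref{thmGE-II} is
$$
H=(\varepsilon-1)\frac{F}{u}+F'=a(\varepsilon\log u+1).
$$
Since $Ric_f\geq0$ we take $K=0$; then, whenever $\varepsilon$ is chosen so that $H\leq0$ throughout $Q_{R,T}$, we have $\max_{Q_{R,T}}\{(n-1)K+H,0\}=0$, and \eqref{theorem1} reduces to
$$
\varepsilon u^{\varepsilon-1}|\nabla u|\leq c(n)M^{\varepsilon}\sqrt{\frac{\varepsilon}{1-\varepsilon}}\Bigl(\frac{C_\varepsilon}{R}+\frac{\sqrt\alpha}{\sqrt R}+\frac{1}{\sqrt{t-t_0+T}}\Bigr),
$$
where $M:=\sup_{Q_{R,T}}u$, $C_\varepsilon:=1+\sqrt{(1-\varepsilon)/\varepsilon}$, and $\alpha$ is the constant of Theorem~\ref{thmGE-II}, independent of $R$. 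Everything then hinges on choosing $\varepsilon$ suitably in each case and letting $R,T\to\infty$ (with $t_0-T\to-\infty$, which uses that $u$ is defined for arbitrarily negative times, as in part (3)) so that the right-hand side vanishes.

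The choice of $\varepsilon$ is dictated by each case. In (1), $a>0$ and $u\leq c<e^{-1}$, so $|\log c|>1$ and any $\varepsilon\in[\,1/|\log c|,1)$ gives $\varepsilon\log u+1\leq\varepsilon\log c+1\leq0$, hence $H\leq0$; since $M\leq c$ is fixed, the right-hand side above tends to $0$ as $R,T\to\infty$. In (2), $a<0$ and $u\geq c$, so $H\leq0$ is equivalent to $\varepsilon\log u+1\geq0$, which holds for all admissible $u$ provided $\varepsilon\log c+1\geq0$ (automatic when $c\geq e^{-1}$, otherwise requiring $\varepsilon\leq1/|\log c|$); in the bounded subcase $M\leq D$ is fixed, while in the polynomial-growth subcase we additionally impose $\varepsilon<1/(2\max\{N_1,N_2\})$, so that with $M=\sup_{Q_{R,T}}u=o(R^{N_1}+T^{N_2})$ and $R=T\to\infty$ each of $M^{\varepsilon}/R$, $M^{\varepsilon}/\sqrt R$ and $M^{\varepsilon}/\sqrt{t-t_0+T}$ tends to $0$. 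In (3), $a=0$ makes $H\equiv0$ for every $\varepsilon$, and again $\varepsilon<1/(2\max\{N_1,N_2\})$ together with $R=T\to\infty$ and $M=o(R^{N_1}+T^{N_2})$ kills the right-hand side. In all cases we conclude $|\nabla u|\equiv0$, so $u=u(t)$.

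It remains to analyse $u'(t)=au(t)\log u(t)$. Writing $v:=\log u$ gives $v'=av$, so $v(t)=v(t_0)e^{a(t-t_0)}$. If $a>0$, then $v(t)\to0$, i.e.\ $u(t)\to1$, as $t\to-\infty$, contradicting $u\leq c<e^{-1}$; hence no such solution exists, which is (1). If $a<0$, then $e^{a(t-t_0)}\to+\infty$ as $t\to-\infty$, so a nonzero $v(t_0)$ would drive $u$ out of its prescribed range as $t\to-\infty$---to $0$ if $v(t_0)<0$, violating $u\geq c>0$, and to $+\infty$ (doubly exponentially) if $v(t_0)>0$, violating the polynomial growth; thus $v(t_0)=0$ and $u\equiv1$, which in particular contradicts $u\leq D<1$ in the bounded subcase and yields the ``moreover'' assertion of (2). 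If $a=0$, then $u'\equiv0$ and $u$ is constant, which is (3).

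The main obstacle is the coordinated choice of $\varepsilon$: it must lie in $(0,1)$, render $H$ nonpositive---and it is exactly this sign condition that forces the thresholds $c<e^{-1}$ in (1) and $c\leq u$, $D<1$ in (2)---and, in the polynomial-growth statements, be small enough to absorb the factor $M^{\varepsilon}$; checking that these demands can be met simultaneously, and then performing the limit that sends the temporal term $(t-t_0+T)^{-1/2}$ to zero (which is where one uses that $u$ extends to arbitrarily negative times), is the heart of the argument, after which the ODE analysis is routine.
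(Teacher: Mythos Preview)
Your proposal is correct and follows essentially the same route as the paper: compute $H=a(1+\varepsilon\log u)$, choose $\varepsilon\in(0,1)$ in each case so that $H\leq0$ (and, for the polynomial-growth statements, additionally so small that $M^{\varepsilon}/\sqrt{R}$ and $M^{\varepsilon}/\sqrt{T}$ vanish in the limit), apply Theorem~\ref{thmGE-II} with $K=0$, let $R,T\to\infty$ to obtain $|\nabla u|\equiv0$, and then solve the resulting ODE $(\log u)'=a\log u$ to reach the required contradictions or the conclusion $u\equiv1$. Your treatment is in fact slightly more explicit than the paper's about the admissible range of $\varepsilon$ and about the tacit use of arbitrarily negative times in parts (1) and (2).
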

This corollary is a significant improvement of Theorem 1.3 in \cite{Wuna}. In fact, the author, in \cite{Wuna} required that the bound of $u$ is $e^{-2}$. Moreover, the rate of $u$ given there is of sublinear growth. 

In \cite{RR95}, the authors proved in Theorem 2 that on a Riemannian manifold with $Ric_M\geq0$ if the Allen-Cahn equation $\Delta u-u^3+u=0$ has a solution $u$ satisfying $3u^2-1\geq0$ and $u$ is of sublinear growth on $M$ then $u$ is constant. Using Theorem \ref{thmGE-II}, we can improve their results by assuming $u^2\geq m^2>0$, for some $m>0$. Note that in \cite{RR95}, the authors required that $u^2\geq\frac{1}{3}$. Our improvement is stated in the following theorem.
\begin{theorem}\label{ratto}
Let $M$ be a Riemannian manifold with $Ric_M\geq0$. Suppose that $u$ is a solution to 
\begin{equation}\label{allen1}
\Delta u-u^3+u=0
\end{equation} If there is some $m>0$ such that $u^2\geq m^2>0$ on $M$ and $u$ is of polynomial growth then $u$ is constant.
\end{theorem}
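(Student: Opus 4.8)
The plan is to obtain Theorem~\ref{ratto} as a direct consequence of the gradient estimate in Theorem~\ref{thmGE-II}, applied to the static solution $u$ of $\Delta u-u^3+u=0$ regarded as a time-independent solution of the parabolic equation $u_t=\Delta_f u+F(u)$ on $M\times[0,\infty)$ with $f\equiv 0$ (so $\Delta_f=\Delta$, $\Ric_f=\Ric\geq 0$, i.e.\ $K=0$) and $F(s)=-s^3+s$.

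First I would reduce to the case $u\geq m>0$. Since $u$ is continuous and $u^2\geq m^2>0$, $u$ never vanishes, hence has a constant sign on the connected manifold $M$; if $u\leq -m$ everywhere, replace $u$ by $-u$, which again solves $\Delta u-u^3+u=0$ because $F(s)=-s^3+s$ is odd and has the same polynomial growth. So we may assume $m\leq u$ on $M$. By completeness every closed ball $\overline{B(x_0,R)}$ is compact, so $u$ attains a finite maximum $M_R$ there, and the polynomial growth hypothesis gives $C>0$ and $N\in\mathbb N$ with $M_R\leq C(1+R^N)$ for all $R$.

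Next I would compute the quantity $H=(\varepsilon-1)F/u+F'$ of Theorem~\ref{thmGE-II}. With $F(s)=-s^3+s$ one has $F(u)/u=1-u^2$ and $F'(u)=1-3u^2$, hence $H=\varepsilon-(2+\varepsilon)u^2$. Since $u^2\geq m^2$, choosing $\varepsilon\in(0,1)$ small enough that $m^2\geq\varepsilon/(2+\varepsilon)$ makes $H\leq 0$ on all of $M$, and together with $K=0$ this forces the last term $\sqrt{\max_{Q_{R,T}}\{(n-1)K+H,0\}}$ in \eqref{theorem1} to vanish. Shrinking $\varepsilon$ further so that $N\varepsilon<1/2$ as well (still an open set of admissible $\varepsilon$), I would fix a reference point $x_0$ and an arbitrary $x\in M$, and for $R$ large apply Theorem~\ref{thmGE-II} on the cylinder $Q_{R,T}=B(x_0,R)\times[0,T]$ (so $t_0=T$) at the point $(x,T)\in Q_{R/2,T}$, which is allowed since $T\neq t_0-T=0$. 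With $H\leq 0$ and $K=0$, the estimate \eqref{theorem1} reduces to
$$\varepsilon\,u(x)^{\varepsilon-1}|\nabla u|(x)\leq c(n)M_R^{\varepsilon}\sqrt{\tfrac{\varepsilon}{1-\varepsilon}}\Big(\tfrac1R+\sqrt{\tfrac{1-\varepsilon}{\varepsilon}}\,\tfrac1R+\tfrac{\sqrt\alpha}{\sqrt R}+\tfrac{1}{\sqrt T}\Big),$$
where $\alpha$ is the constant of Theorem~\ref{thmGE-II}, depending only on $x_0$. Letting $T\to\infty$ and then $R\to\infty$, and using $M_R^{\varepsilon}\lesssim R^{N\varepsilon}$ with $N\varepsilon<1/2$, each term on the right-hand side tends to $0$, while the left-hand side does not depend on $R$ or $T$; hence $\varepsilon\,u(x)^{\varepsilon-1}|\nabla u|(x)=0$. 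Since $\varepsilon>0$ and $u(x)\geq m>0$ force $u(x)^{\varepsilon-1}>0$, we get $|\nabla u|(x)=0$, and as $x\in M$ was arbitrary, $u$ is constant.

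I expect the only genuine point to watch is the interplay of the two smallness requirements on $\varepsilon$: it must be small in terms of $m$ (to make $H\leq 0$) and small in terms of the polynomial degree $N$ (to beat the growth of $M_R$ against the cutoff terms $1/R$ and $\sqrt\alpha/\sqrt R$ in \eqref{theorem1}). Both hold for any sufficiently small $\varepsilon\in(0,1)$, and for such $\varepsilon$ the $\varepsilon$-dependent factors $\sqrt{\varepsilon/(1-\varepsilon)}$ and $\sqrt{(1-\varepsilon)/\varepsilon}$ stay bounded, so the prefactor $c(n)\sqrt{\varepsilon/(1-\varepsilon)}$ is harmless in the limit. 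The remaining verifications are routine.
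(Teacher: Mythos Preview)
Your proof is correct and follows essentially the same approach as the paper: reduce to $u\geq m$, choose $\varepsilon$ small enough that both $H\leq 0$ and $M_R^{\varepsilon}$ is beaten by the decay in $R$, apply the gradient estimate of Theorem~\ref{thmGE-II}, and let $T,R\to\infty$. The only minor difference is that the paper, using $f\equiv 0$ and $\Ric\geq 0$, invokes the standard Laplacian comparison $\Delta r\leq (n-1)/r$ in place of \eqref{Bri} to effectively replace $\alpha$ by a term of order $1/R$, so that the weaker requirement $N\varepsilon<1$ suffices; your choice $N\varepsilon<1/2$, keeping $\alpha$ as a fixed constant, works just as well.
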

It is also worth to mention that if the Ricci curvature is bounded from below, saying $Ric_M\geq-A$ for some $A\geq0$ and $3u^2-1-A\geq0$, Ratto and Rigoli proved in \cite{RR95} Theorem 2 that $u$ has to be constant if $u$ is of sublinear growth. In fact, using the same argument as in the proof of Theorem \ref{ratto}, we can prove that $u$ must be constant if $u^2\geq m>\frac{A}{2}$ for some $m>0$ and $u$ is of polynomial growth. Therefore, our gradient estimate is a significant improvement of those in \cite{RR95}. It is also nice to inform that the method used in this paper works well for $F=-u^p+u, p>1$ (see section 3 for further details). In this case, we still obtain the same Liouville type theorem as in Theorem \ref{ratto}. On the other hand, we also want to note that Bailesteanu also gave a gradient estimate for Allen-Cahn equation. However, his estimate can not implies Liouville property. Therefore, in some sense, our estimate is better than those in \cite{Bai17}. 

Finally, we introduce another Liouville type theorems for Fisher application as an application of Theorem \ref{thmGE-II}. It is also worth to notice that gradient estimates for Fisher equation on Riemannian manifold was given recently in \cite{CLPW17}. However, we can not use their gradient estimate to show Liouville property.
\begin{theorem}\label{Fisher}
Let $M$ be a Riemannian manifold with $Ric_M\geq0$. Suppose that $u$ is a solution to 
\begin{equation}\label{allen1}
\Delta u-cu(u-1)=0, 
\end{equation}
where $c$ is a positive constant. If there exists $m>0$ such that $u\geq m>0$ on $M$ and $u$ is of polynomial growth then $u$ is constant.
\end{theorem}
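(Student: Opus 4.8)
The plan is to observe that any solution of the static equation \eqref{allen1} is automatically a time-independent solution of $u_t=\Delta_f u+F(u)$ with $f\equiv 0$, Bakry--\'Emery bound $K=0$, and $F(u)=cu(1-u)=cu-cu^2$, so that Theorem \ref{thmGE-II} is available on every parabolic cylinder. The first step is to evaluate the quantity $H$ from \eqref{theorem1}: since $F/u=c(1-u)$ and $F'=c-2cu$, one gets
\[
H=(\varepsilon-1)\frac{F}{u}+F'=c\varepsilon-c(1+\varepsilon)u.
\]
Because $u\ge m>0$ on all of $M$, we have $H\le c\varepsilon-c(1+\varepsilon)m$, which is $\le 0$ whenever $\varepsilon(1-m)\le m$; thus $H\le 0$ throughout $M$ for every $\varepsilon\in(0,\tfrac12]$ with $\varepsilon\le m/(1-m)$ (and for every such $\varepsilon$ at all when $m\ge 1$). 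Combined with $K=0$, this makes the term $\sqrt{\max_{Q_{R,T}}\{(n-1)K+H,\,0\}}$ in \eqref{theorem1} vanish identically.

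Next I would fix an arbitrary point $p\in M$ and apply Theorem \ref{thmGE-II} with $x_0=p$ on $Q_{R,T}=B(p,R)\times[t_0-T,t_0]$, taking $M:=M_R=\sup_{B(p,R)}u<\infty$ and reading the estimate at the point $(p,t_0)$, which is legitimate since $p\in B(p,R/2)$ and $t_0\ne t_0-T$. As $u$ is static, the left-hand side $\varepsilon\,u(p)^{\varepsilon-1}|\nabla u|(p)$ does not depend on $R$ or $T$. Letting $T\to\infty$ removes the term $1/\sqrt{t-t_0+T}$, leaving
\[
\varepsilon\,u(p)^{\varepsilon-1}|\nabla u|(p)\le c(n)\,M_R^{\varepsilon}\sqrt{\tfrac{\varepsilon}{1-\varepsilon}}\Big(\tfrac1R+\sqrt{\tfrac{1-\varepsilon}{\varepsilon}}\,\tfrac1R+\tfrac{\sqrt{\alpha}}{\sqrt{R}}\Big),
\]
where $\alpha=\max_{B(p,1)}\Delta r$ is a finite constant depending only on $p$ and $n$ (finiteness via the Laplacian comparison theorem under $Ric_M\ge 0$, with the standard modification of $r$ near $p$), in particular independent of $R$. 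Hence the right-hand side is $O\!\big(M_R^{\varepsilon}R^{-1/2}\big)$ as $R\to\infty$.

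Finally I would feed in the polynomial growth hypothesis: $M_R\le C(1+R^{N})$ for some fixed $N\in\mathbb N$ and all $R\ge 1$, so that $M_R^{\varepsilon}R^{-1/2}\le C'R^{N\varepsilon-1/2}$. Choosing $\varepsilon$ additionally so small that $N\varepsilon<1/2$ --- compatible with the earlier requirement $\varepsilon\le\min\{1/2,\,m/(1-m)\}$, since all the thresholds are strictly positive --- the right-hand side tends to $0$ as $R\to\infty$. Therefore $\varepsilon\,u(p)^{\varepsilon-1}|\nabla u|(p)=0$, and as $u(p)\ge m>0$ this forces $\nabla u(p)=0$; since $p$ was arbitrary, $u$ is constant.

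I expect the only delicate point to be the choice of $\varepsilon$: it must simultaneously render $H\le 0$ --- which is precisely where the positive lower bound $u\ge m$ enters, explaining why no stronger pointwise condition on $u$ is needed --- and defeat the polynomial growth of $M_R$ against the $R^{-1/2}$ gain coming from Theorem \ref{thmGE-II}; fortunately both constraints are met on a nonempty interval $(0,\varepsilon_0)$. Everything else is a direct substitution into the gradient estimate, so the substantive work is entirely contained in Theorem \ref{thmGE-II} itself.
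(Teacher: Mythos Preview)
Your proposal is correct and follows essentially the same route as the paper, which simply invokes Remark~\ref{rem}(1) to make $H\le 0$ for small $\varepsilon$ and then repeats verbatim the argument of Theorem~\ref{ratto}: apply Theorem~\ref{thmGE-II} with $K=0$, use polynomial growth $M_R=o(R^N)$, and shrink $\varepsilon$ further so the right side tends to $0$ as $R\to\infty$. The only cosmetic difference is that the paper exploits the Laplacian comparison under $\Ric\ge 0$ to take $\alpha\sim 1/R$, obtaining decay $M_R^{\varepsilon}/R$ and the milder constraint $N\varepsilon<1$, while you keep $\alpha$ as a fixed finite constant and work with $M_R^{\varepsilon}/\sqrt{R}$ and $N\varepsilon<1/2$; since $\varepsilon$ may be chosen arbitrarily small, either version closes the argument.
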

This paper is organized as follows. In section 2, we give a proof of the main Theorem \ref{thmGE-II}. The Liouville properties are proven in section 3 as applications of gradient estimates in Theorem \ref{thmGE-II}.
\section{Gradient estimates for the general nonlinear parabolic equation}
To begin with, let us start by giving an important computational lemma.
\begin{lemma}\label{lemm}
Let $ A, B$ be real numbers. Suppose that $u$ is a  solution to the equation 
$$u_t=\Delta_f u+F(u)$$
on $Q_{R, T}:=B(x_0, R)\times[t_0-T, t_0]$, where $x_0\in M$ is a fixed point, $R>0$, and $t_0\in\mathbb{R}$. 
For any $0<\epsilon<1$ fixed, let $w=|\nabla u^{\varepsilon}|^2$, then the following estimate

\begin{equation}
\Delta_fw-w_t\geq -2\max\{K(n-1)+H, 0\}w+2\dfrac{\varepsilon-1}{\varepsilon}\dfrac{1}{u^\varepsilon}\langle\nabla w,\nabla u^\varepsilon\rangle+2\dfrac{1-\varepsilon}{\varepsilon u^{2\varepsilon}}w^2\label{dung1}
\end{equation}
holds true on $Q_{R, T}$ where 
$$
H=(\varepsilon-1)F/u+F'.
$$
\end{lemma}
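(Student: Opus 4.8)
The plan is to carry out a standard Bochner-type computation adapted to the weighted Laplacian, applied to the function $w = |\nabla u^\varepsilon|^2$, and then to bound the resulting error terms using the equation for $u$.

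First I would introduce $v := u^\varepsilon$ and derive the parabolic equation that $v$ satisfies. Since $u_t = \Delta_f u + F(u)$, a direct computation gives
\begin{equation}
v_t = \Delta_f v - \frac{\varepsilon-1}{\varepsilon}\,\frac{|\nabla v|^2}{v} + \varepsilon\, u^{\varepsilon-1} F(u).\nonumber
\end{equation}
The point of working with $v$ rather than $u$ is that the nonlinearity $F$ and the extra first-order term combine in a way that, after differentiating, produces precisely the quantity $H = (\varepsilon-1)F/u + F'$. I would then record, for later use, the gradient of the lower-order terms, i.e. compute $\langle \nabla(\varepsilon u^{\varepsilon-1}F(u)), \nabla v\rangle$ in terms of $w = |\nabla v|^2$; the $u$-derivatives arrange themselves so that this inner product equals $2 u^{-\varepsilon} H\, w$ (up to the contribution that will be absorbed by the first-order term in \eqref{dung1}).

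Next I would apply the weighted Bochner formula
\begin{equation}
\Delta_f w = 2|\Hess v|^2 + 2\langle \nabla \Delta_f v, \nabla v\rangle + 2\Ric_f(\nabla v, \nabla v),\nonumber
\end{equation}
together with $w_t = 2\langle \nabla v_t, \nabla v\rangle$, so that $\Delta_f w - w_t$ splits into the Hessian term, the Ricci term bounded below using $\Ric_f \geq -(n-1)K$, the term coming from $\langle \nabla(v_t - \Delta_f v), \nabla v\rangle$ which supplies both the first-order term $2\frac{\varepsilon-1}{\varepsilon}\frac{1}{v}\langle \nabla w, \nabla v\rangle$ and the nonlinear contribution involving $H$, and finally the Kato-type gain. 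The key algebraic input is the refined Kato inequality coming from the pointwise constraint on $\Hess v$: differentiating the equation for $v$ along $\nabla v$ and using $|\Hess v|^2 \geq |\nabla|\nabla v||^2 = \frac{|\nabla w|^2}{4w}$ — or, more usefully here, isolating the component of $\Hess v$ in the direction of $\nabla v$ — yields the positive quadratic term $2\frac{1-\varepsilon}{\varepsilon v^2} w^2$. Collecting all terms and discarding the manifestly nonnegative leftover pieces gives \eqref{dung1} with the constant $-2\max\{(n-1)K + H, 0\}$ in front of the linear-in-$w$ term, the max appearing because we only keep this error when it has the unfavorable sign.

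The main obstacle I expect is the bookkeeping in the third step: one must be careful to extract exactly the coefficient $2\frac{1-\varepsilon}{\varepsilon}$ on the $w^2/v^2$ term (as opposed to a weaker constant), which requires using the full strength of the improved Kato inequality rather than the crude $|\Hess v|^2 \geq \frac{|\nabla w|^2}{4w}$, and simultaneously matching up the cross-terms between $\frac{|\nabla v|^2}{v}$ and its gradient so that the only surviving first-order object is $\frac{1}{v}\langle \nabla w, \nabla v\rangle$ with the stated coefficient. A clean way to organize this is to write $\Hess v = \frac{\langle \Hess v\, \nabla v, \nabla v\rangle}{|\nabla v|^2}\,\frac{\nabla v \otimes \nabla v}{|\nabla v|^2} + (\text{rest})$, bound the "rest" by zero, express $\langle \Hess v\,\nabla v,\nabla v\rangle$ via $\frac{1}{2}\langle \nabla w, \nabla v\rangle$, and then use the equation to replace the trace term $\Delta_f v$; after completing the square in the resulting expression the desired inequality drops out, with all remaining terms nonnegative.
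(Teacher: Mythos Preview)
Your overall plan --- set $v=u^\varepsilon$, apply the weighted Bochner formula to $w=|\nabla v|^2$, and substitute the equation for $\Delta_f v$ --- is exactly what the paper does. The confusion is in where the quadratic term $2\frac{1-\varepsilon}{\varepsilon v^2}w^2$ comes from. You attribute it to an improved Kato inequality extracted from $|\Hess v|^2$, and you flag the associated bookkeeping as the main obstacle. In fact no Kato argument is needed at all: in the paper's proof the Hessian term from Bochner is simply discarded via $|\Hess v|^2\geq 0$, and the $w^2$ term appears automatically when you differentiate the first-order piece of $\Delta_f v$. Concretely, from
\[
\Delta_f v \;=\; \frac{\varepsilon-1}{\varepsilon}\,\frac{w}{v} \;+\; \varepsilon u^{\varepsilon-1}(u_t-F),
\]
the contribution $2\big\langle \nabla\big(\tfrac{\varepsilon-1}{\varepsilon}\tfrac{w}{v}\big),\nabla v\big\rangle$ expands, using $\nabla(w/v)=v^{-1}\nabla w - v^{-2}w\,\nabla v$, to
\[
2\,\frac{\varepsilon-1}{\varepsilon}\,\frac{1}{v}\langle\nabla w,\nabla v\rangle \;+\; 2\,\frac{1-\varepsilon}{\varepsilon v^{2}}\,w^{2},
\]
which already gives both the first-order term \emph{and} the quadratic gain with the exact coefficient. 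The remaining piece $2\langle\nabla(\varepsilon u^{\varepsilon-1}(u_t-F)),\nabla v\rangle$ produces $w_t - 2Hw$. So there is no completing-the-square step, no decomposition of $\Hess v$ along $\nabla v$, and no cross-terms to match up; the ``obstacle'' you anticipate does not arise. If you tried to obtain $w^2/v^2$ from $|\Hess v|^2$ via a Kato-type bound you would not succeed, since nothing forces $\Hess v$ to be large in terms of $w/v$.
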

\begin{proof}Since, $w=|\nabla u^\varepsilon|^2=\varepsilon^2u^{2\varepsilon-2}|\nabla u|^2$, a simple computation shows that
$$\begin{aligned}
 \Delta_fu^\varepsilon
 &=\dfrac{\varepsilon -1}{\varepsilon}\dfrac{w}{u^\varepsilon}+\varepsilon u^{\varepsilon-1}\Delta_f u=\dfrac{\varepsilon -1}{\varepsilon}\dfrac{w}{u^\varepsilon}+\varepsilon u^{\varepsilon-1}(u_t-F).
 \end{aligned}$$

By the Bochner--Weitzenb\"ock formula, we have
\begin{align*}
\Delta_f w\geq&-2K(n-1)w+2\langle\nabla\Delta_f u^\varepsilon,\nabla u^\varepsilon\rangle\\
=&-2K(n-1)w+2\left\langle \nabla\left (\dfrac{\varepsilon -1}{\varepsilon}\dfrac{w}{u^\varepsilon}+\varepsilon u^{\varepsilon-1}(u_t-F)\right ),\nabla u^\varepsilon\right \rangle\\
=&-2K(n-1)w+2\dfrac{\varepsilon-1}{\varepsilon}\dfrac{1}{u^\varepsilon}\langle\nabla w,\nabla u^\varepsilon\rangle+2\dfrac{1-\varepsilon}{\varepsilon u^{2\varepsilon}}w^2+w_t-2\left ((\varepsilon-1)\dfrac{F}{u}+F'\right )w\\
\geq & -2\max\{K(n-1)+H,0\}w+2\dfrac{\varepsilon-1}{\varepsilon}\dfrac{1}{u^\varepsilon}\langle\nabla w,\nabla u^\varepsilon\rangle+2\dfrac{1-\varepsilon}{\varepsilon u^{2\varepsilon}}w^2+w_t
\end{align*}
where $H=(\varepsilon-1)F/u+F'$. The proof is complete.
\end{proof}
\begin{remark}\label{rem}
We have two important remarks.
\begin{enumerate}
\item When $F=-cu^2+cu$, by \eqref{e1}, it is easy to see that 
\begin{align}
H&=(\varepsilon-1)F/u+F'\notag\\
&=c(\varepsilon-1)(-u+1)-2cu+c=c[-u(\varepsilon+1)+\varepsilon].\label{e1}
\end{align}
Hence, $H\leq0$ if $ -u(\varepsilon+1)+\varepsilon\leq 0$. This condition can be rewritten as
$$u\geq \frac{\varepsilon}{\varepsilon+1}.$$
Therefore, if $u\geq m>0$ then we can choose $\epsilon>0$ such that 
$$ m\geq \frac{\varepsilon}{\varepsilon+1}.$$
Thus as its consequence, we have $H\leq0$.
\item Similarly, if $F=-u^3+u$ then 
\begin{align}
H&=(\varepsilon-1)F/u+F'\notag\\
&=(\varepsilon-1)(-u^2+1)-3u^2+1=-(\varepsilon+2)u^2+\varepsilon;\label{e2}
\end{align}
It turns out that $H\leq0$ if $-(\varepsilon+2)u^2+\varepsilon\leq 0$. This condition can be read as
$$u^2\geq \frac{\varepsilon}{\varepsilon+2}. $$
Hence, if $u\geq m>0$, we can choose $\varepsilon$ satisfying 
$$m^2\geq \frac{\varepsilon}{\varepsilon+2}$$
and $H\leq0$.
\end{enumerate}
\end{remark}
\begin{lemma}[see \cite{SZ06, Wu15}]
\label{cutoff}
Fix $t_0\in\mathbb{R}$ and $T>0$. For any give $\tau\in (t_0-T,t_0]$, there exists a smooth function $\bar\psi:[0,+\infty)\times[t_0-T,t_0]\to\mathbb{R}$ satisfying following properties
\begin{enumerate}
\item [(i)] $0\leqslant \bar{\psi}(r,t)\leqslant 1$ 
in $[0,R]\times[t_0-T,t_0]$ and $\bar\psi$ is supported in a subset of $[0,R]\times[t_0-T,t_0]$;
\item [(ii)] $\bar{\psi}(r,t)=1$ and $\bar{\psi}_r(r,t)=0$
in $[0,R/2]\times[\tau,t_0]$ and $[0,R/2]\times[t_0-T,t_0]$, respectively;
\item [(iii)] $|\bar{\psi}_t|\leqslant C (\tau-t_0+T)^{-1}\bar\psi^{1/2} $
in $[0,+\infty)\times[t_0-T,t_0]$ for some $C>0$ and $\bar{\psi}(r,t_0-T)=0$ for all $r\in [0,+\infty)$;
\item [(iv)] $- C_\epsilon\bar{\psi}^{\epsilon} /R \leqslant \bar{\psi}_r\leqslant 0$ and $|\bar{\psi}_{rr}|\leqslant C_\epsilon\bar{\psi}^\epsilon /R^2$
in $[0,+\infty)\times[t_0-T,t_0]$ for every $\epsilon \in (0,1]$ with some constant $C_\epsilon$ depending on $\epsilon$.
\end{enumerate}
\end{lemma}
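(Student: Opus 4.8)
\smallskip
\noindent\textbf{Proof plan.}
The plan is to build $\bar\psi$ in separated form $\bar\psi(r,t)=\phi(r)\,\eta(t)$, where $\phi$ and $\eta$ are rescalings of two fixed one--variable model cutoffs. First I would fix a smooth, non--increasing profile $\Phi\colon[0,+\infty)\to[0,1]$ with $\Phi\equiv1$ on $[0,1/2]$ and $\Phi\equiv0$ on $[1,+\infty)$, chosen so that near $y=1$ it vanishes \emph{to infinite order in a controlled way}, comparably to $\exp\!\big(-1/(1-y)\big)$; then I set $\phi(r):=\Phi(r/R)$. From $\Phi$ one reads off $\phi\equiv1$ on $[0,R/2]$, $\operatorname{supp}\phi\subset[0,R]$, $\phi'\equiv0$ on $[0,R/2]$, $\phi'\le0$, and, by the chain rule, $|\phi'|\le C/R$, $|\phi''|\le C/R^{2}$. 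Moreover, because for each $\epsilon\in(0,1)$ the ratios $|\Phi'|/\Phi^{\epsilon}$ and $|\Phi''|/\Phi^{\epsilon}$ extend continuously by $0$ to $y=1$ and hence are bounded on $[0,1]$ by some $C_\epsilon$, one also obtains $-C_\epsilon\phi^{\epsilon}/R\le\phi'\le0$ and $|\phi''|\le C_\epsilon\phi^{\epsilon}/R^{2}$.

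In the same spirit I would fix a smooth, non--decreasing $E\colon\RR\to[0,1]$ with $E\equiv0$ on $(-\infty,0]$ and $E\equiv1$ on $[1,+\infty)$, vanishing to infinite order at $0$ (comparably to $\exp(-1/s)$ for small $s>0$), so that $|E'|\le C\,E^{1/2}$ on $\RR$. Since $\tau\in(t_0-T,t_0]$ gives $\tau-t_0+T>0$, I set $\eta(t):=E\!\big((t-(t_0-T))/(\tau-t_0+T)\big)$, which then satisfies $\eta(t_0-T)=0$, $\eta\equiv1$ on $[\tau,t_0]$, $0\le\eta\le1$, and $|\eta_t|\le C(\tau-t_0+T)^{-1}\eta^{1/2}$.

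It remains to verify the four properties for $\bar\psi=\phi\,\eta$. Properties (i) and (ii) are immediate, noting $\bar\psi_r=\phi'\eta$ and $\phi'\equiv0$ on $[0,R/2]$. For (iii), $\bar\psi_t=\phi\,\eta_t$, so $|\bar\psi_t|\le C(\tau-t_0+T)^{-1}\phi\,\eta^{1/2}\le C(\tau-t_0+T)^{-1}(\phi\eta)^{1/2}=C(\tau-t_0+T)^{-1}\bar\psi^{1/2}$, using $0\le\phi\le1$ so that $\phi\le\phi^{1/2}$, together with $\bar\psi(r,t_0-T)=\phi(r)\eta(t_0-T)=0$. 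For (iv), $\bar\psi_r=\phi'\eta\le0$ while $|\bar\psi_r|=|\phi'|\eta\le C_\epsilon R^{-1}\phi^{\epsilon}\eta\le C_\epsilon R^{-1}(\phi\eta)^{\epsilon}=C_\epsilon R^{-1}\bar\psi^{\epsilon}$, and likewise $|\bar\psi_{rr}|=|\phi''|\eta\le C_\epsilon R^{-2}\bar\psi^{\epsilon}$, this time using $0\le\eta\le1$ so that $\eta\le\eta^{\epsilon}$.

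The only step demanding genuine care is the construction of the model profiles $\Phi$ and $E$ with those refined one--sided derivative bounds: they force $\Phi$ and $E$ to vanish to \emph{infinite} order at the ends of their supports, because a finite-order vanishing $\Phi(y)\sim(1-y)^{k}$ only produces the bounds for $\epsilon$ bounded away from $1$ (with the admissible range shrinking as the order of differentiation grows), and the borderline case $\epsilon=1$ is excluded outright --- an estimate $|\Phi'|\le C\Phi$ together with $\Phi(1)=0$ forces, by Grönwall, $\Phi\equiv0$ on $[0,1]$, contradicting $\Phi\equiv1$ on $[0,1/2]$ --- so property (iv) is to be understood with $\epsilon\in(0,1)$. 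Everything past the model construction is routine rescaling plus the elementary fact that $0\le s\le1$ implies $s\le s^{\epsilon}$.
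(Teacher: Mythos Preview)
The paper does not actually prove this lemma; it is simply quoted with a reference to \cite{SZ06, Wu15}. Your product construction $\bar\psi(r,t)=\phi(r)\,\eta(t)$ with rescaled infinite-order-vanishing one-variable profiles is precisely the standard construction appearing in those references, and your verification of properties (i)--(iv) is correct. Your observation that the endpoint $\epsilon=1$ in (iv) is in fact impossible (by the Gr\"onwall argument you give) is also correct; the lemma as stated is slightly imprecise on this point, but since the applications in the proof of the main gradient estimate only ever invoke (iv) with $\epsilon=1/2$ and $\epsilon=3/4$, this does not affect anything downstream.
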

Now, we give a proof of gradient estimate for the general equation \eqref{general}.
\begin{proof}[Proof of Theorem \ref{thmGE-II}] This proof of this theorem is standard. We will follow the arguments in \cite{DKN18, LY86, SZ06, Wu15, Wuna}. First of all, with each fixed time $\tau\in (t_0-T,t_0]$, we choose a cut-off function $\bar\psi(r,t)$ as in Lemma \ref{cutoff}. To prove the theorem, we will show that the inequality \eqref{theorem1} holds true at every point $(x,\tau)$ in $Q_{R/2,T}$. Let us define the function $\psi:M\times[t_0-T,t_0]\to \mathbb{R}$ given by
$$
\psi (x,t) =\bar\psi(d(x,x_0),t)
$$
where $x_0\in M$ is a fixed point given in the statement of the theorem. Assume that $\psi w$ obtains its maximum value in $\{(x, t)\in M\times[t_0-T, \tau]: d(x, x_0)\leqslant R\}$ at $(x_1,t_1)$.
We may assume that $(\psi w)(x_1, t_1)>0$; otherwise, it follows from $(\psi w)(x_1, t_1)\leqslant 0$ that $(\psi w)(x, \tau)\leqslant 0$ for all $x\in M$ such that $d(x, x_0)\leqslant R$. However, by the definition of $\psi$, we have $\psi(x, \tau)\equiv 1$ for all $x\in M$ satisfying $d(x, x_0)\leqslant R/2$. This implies that $w(x,\tau)\leqslant 0$ when $d(x, x_0)\leqslant R/2$. Since $\tau$ is arbitrary, we conclude that \eqref{theorem1} holds on $Q_{R/2, T}$. Moreover, due to the standard argument of Calabi \cite{Calabi}, we may also assume that $(\psi w)$ is smooth at $(x_1, t_1)$. 

Since $(x_1, t_1)$ is the maximum point of $\psi w$, we infer that at $(x_1,t_1)$, the following facts hold true: $\nabla(\psi w)=0$, $\Delta_f(\psi w)\leqslant 0$, and $(\psi w)_t\geqslant 0$. Hence, still being at $(x_1, t_1)$, we conclude
\[
\begin{split}
0 \geq \Delta_f(\psi w)-(\psi w)_t=\psi(\Delta_fw-w_t)+w(\Delta_f\psi-\psi_t)+2\langle\nabla w,\nabla\psi\rangle.
\end{split}
\]
This inequality combining with \eqref{dung1} implies 
\[
\begin{split}
0\geqslant &-2[(n-1)K+H](\psi w)+2\frac{1-\varepsilon}{\varepsilon u^{2\varepsilon}}\psi w^2\\
&+2\frac{1-\varepsilon}{\varepsilon u^\varepsilon}\left\langle \nabla\psi,\nabla u^\varepsilon\right\rangle w +w\Delta_f\psi-w\psi_t-2\frac{|\nabla \psi|^2}{\psi}w
\end{split}
\]
at $(x_1,t_1)$. In other words, we have just proved that
\begin{equation}\label{24}
\begin{split}
\frac{2(1-\varepsilon)}{\varepsilon u^{2\varepsilon}}\psi w^2\leqslant & 2[(n-1)K+H]\psi w-2\frac{1-\varepsilon}{\varepsilon u^{\varepsilon}}\left\langle\nabla \psi,\nabla u^\varepsilon\right\rangle w \\
&-w\Delta_f\psi+w\psi_t+2\frac{|\nabla\psi|^2}{\psi}w
\end{split}
\end{equation}
at $(x_1,t_1)$. Let $P=\dfrac{1-\varepsilon}{\varepsilon u^{2\varepsilon}}$, then $\dfrac{1}{P}\leq\dfrac{\varepsilon M^{2\varepsilon}}{1-\varepsilon}$ on $Q_{R, T}$. We have two possible cases.

\medskip\noindent\textbf{Case 1}. If $x_1\in B(x_0,R/2)$, then for each fixed $\tau \in (t_0-T, t_0]$, there holds $\psi (\cdot, \tau) \equiv 1$ everywhere on the spacelike in $B(x_0,R/2)$ by the definition of $\psi$. This and \eqref{24} yield
$$
2P\psi w^2\leqslant 2[(n-1)K+H]\psi w+w\psi_t
$$ 
at $(x_1,t_1)$. For arbitrary $(x,\tau) \in B(x_0, R/2) \times (t_0-T, t_0]$, we observe that
\begin{align*} 
w(x, \tau) &=\psi^{1/2}w(x, \tau)\leqslant \frac{\varepsilon M^{2\varepsilon}}{1-\varepsilon}(P\psi^{1/2}w)(x_1,t_1) \\
&\leqslant \frac{\varepsilon M^{2\varepsilon}}{1-\varepsilon}\left\{[(n-1)K+H]\psi^{1/2}\big|_{(x_1,t_1)}+\frac{\psi_t}{2\psi^{1/2}}\Big|_{(x_1,t_1)}\right\}\\ 
&\leqslant \frac{\varepsilon M^{2\varepsilon}}{1-\varepsilon}\left\{[(n-1)K+H]+ c (\tau-t_0 +T)^{-1}\right\},
\end{align*}
where we used Lemma \ref{cutoff} in the first equality and the last inequality. Since $\tau$ can be arbitrarily chosen, we complete the proof of \eqref{theorem1} in this case.

\medskip\noindent\textbf{Case 2}. Suppose that $x_1 \notin B(x_0,R/2)$ where $R\geqslant 2$. From now on, we use $c$ to denote a constant depending only on $n$ whose value may change from line to line. Since $\Ric_f\geqslant -(n-1)K$ and $r(x_1,x_0)\geqslant 1$ in $B(x_0,R)$, we can apply the $f$-Laplacian comparison theorem in \cite{Bri} to obtain
\begin{equation}\label{Bri}
\Delta_fr(x_1)\leqslant \alpha +(n-1)K(R-1), 
\end{equation}
where $\alpha:=\max_{x\in B(x_0,1)}\Delta_fr(x)$. This $f$-Laplacian comparison theorem combining with Lemma \ref{cutoff} implies
\begin{equation}\label{25}
\begin{split}
-w\Delta_f\psi=&-w\big[\psi_r\Delta_fr+\psi_{rr}|\nabla r|^2\big] \\
=&w(-\psi_r)\Delta_fr-w\psi_{rr} \\
\leqslant &-w\psi_r\big(\alpha+(n-1)K(R-1)\big)-w\psi_{rr} \\
\leqslant &w\psi^{1/2}\frac{|\psi_{rr}|}{\psi^{1/2}}+|\alpha|\psi^{1/2}w\frac{|\psi_r|}{\psi^{1/2}}+\frac{(n-1)K(R-1)|\psi_r|}{\psi^{1/2}}\psi^{1/2}w \\
\leqslant &\frac{P}{8}\psi w^2+\frac{c}{P}\Big[\Big(\frac{|\psi_{rr}|}{\psi^{1/2}}\Big)^2+\Big(\frac{|\alpha||\psi_rw|}{\psi^{1/2}}\Big)^2+\Big(\frac{K(R-1)|\psi_r|}{\psi^{1/2}}\Big)^2\Big] \\
\leqslant &\frac{P}{8}\psi w^2+\frac{c}{P}\Big(\frac{1}{R^4}+\frac{|\alpha|^2}{R^2}+K^2\Big).
\end{split}
\end{equation}
On the other hand, by the Young inequality, we obtain
\begin{equation}\label{26}
\begin{split}
-2\frac{1-\varepsilon}{\varepsilon u^\varepsilon}\left\langle\nabla\psi,\nabla u^\varepsilon\right\rangle w 
\leqslant  &2\big[P\psi w^2\big]^{3/4}\frac{(1-\varepsilon)|\nabla\psi|}{\varepsilon u^\varepsilon(P\psi)^{3/4}} \\
\leqslant &P\psi w^2+\frac{c}{P}\dfrac{(1-\varepsilon)^2}{\varepsilon^2}\dfrac{|\nabla\psi|^4}{\psi^3}\\
\leqslant &P\psi w^2+\frac{c}{P}\frac{(1-\varepsilon)^2}{\varepsilon^2}\dfrac{1}{R^4}.
\end{split}
\end{equation}
By using the Cauchy--Schwarz inequality several times, it is not hard for us to see that the following estimates hold true: first with note that $0\leq\psi\leq1$, we have
\[
2[(n-1)K+H]\psi w\leqslant \frac{P}{8}\psi w^2+ \frac{c}{P}[(n-1)K+H]^2
\]
then 
\[\begin{split}
w\psi_t=\psi^{1/2}w\frac{\psi_t}{\psi^{1/2}}&\leqslant \frac{P}{8}\psi w^2+\frac{c}{P}\Big(\frac{\psi_t}{\psi^{1/2}}\Big)^2 \leqslant \frac{P}{8}\psi w^2+\frac{c}{P}\frac{1}{(\tau -t_0+T)^2}
\end{split}\]
and finally for $|\nabla\psi|^2 w/\psi$, we obtain
\[\begin{split}
2\frac{|\nabla\psi|^2}{\psi}w
=2\psi^{1/2}w\frac{|\nabla\psi|^2}{\psi^{3/2}}&\leqslant \frac{P}{8}\psi w^2+\frac{c}{P}\Big(\frac{|\nabla\psi|^2}{\psi^{3/2}}\Big)^2 \leqslant \frac{P}{8}\psi w^2+\frac{c}{P}\frac{1}{R^4}. 
\end{split}\]
Now, we combine \eqref{24}--\eqref{26} and all above three estimates to get
\[\begin{split}
2 P\psi w^2\leqslant & P\psi w^2+\frac{c}{P} \left\{
\begin{split}
&\frac{1}{R^4}+\frac{(1-\varepsilon)^2}{\varepsilon^2}\frac{1}{R^4}+\frac{\alpha^2}{R^2} \\
&+[(n-1)K+H]^2+\frac{1}{(\tau -t_0+T)^2}
\end{split}
\right\} +\frac{P}{2}\psi w^2.
\end{split}\]
This inequality implies 
\begin{align*}
\psi w^2\leqslant \frac{c}{P^2}\left(\frac{1}{R^4}+\frac{(1-\varepsilon)^2}{\varepsilon^2}\frac{1}{R^4}+[(n-1)K+H]^2+\frac{\alpha^2}{R^2}+\frac{1}{(\tau -t_0+T)^2}\right).
\end{align*} 
The finally, since $\psi(\cdot,\tau) \equiv 1$ in $B(x_0,R/2)$ , we conclude that
$$\begin{aligned}
w^2(x, \tau)&\leqslant \psi w^2(x_1,t_1)\\
&\leqslant \frac{c\varepsilon^2M^{4\varepsilon}}{(1-\varepsilon)^2}\left(\frac{1}{R^4}+\frac{(1-\varepsilon)^2}{\varepsilon^2}\frac{1}{R^4}+\frac{\alpha^2}{R^2}+\frac{1}{(\tau -t_0+T)^2}+[(n-1)K+H]^2\right)
\end{aligned}$$
for all $x\in B(x_0, R/2)$. Since $\tau$ is arbitrary, this also completes the proof of Theorem \eqref{theorem23} in this case.
\end{proof}
Now, we show an application to study Liouville properties of heat solutions on smooth metric measure space $(M, g, e^{-f}dv)$. For some $a\in\mathbb{R}$, let $F=au\log u$ and consider the following heat equation
\begin{equation}\label{grs}
u_t=\Delta_fu+au\log u.
\end{equation}
\begin{proof}[Proof of Corollary \ref{corol1}]
For any $0<\varepsilon<1$, since $F=au\log u$, we can compute 
$$\begin{aligned}
H&=(\varepsilon-1)F/u+F'
=a(\varepsilon-1)\log u+a(\log u+1)\\
&=a(1+\varepsilon\log u).
\end{aligned}$$
\begin{enumerate}
\item If $a>0$ and $0<u\leq c<e^{-1}$, we can choose $0<\varepsilon<1$ such that $H\leq0$. We fix such $\varepsilon$. By Theorem \ref{thmGE-II}, we have that
\begin{equation}\label{w1}
\varepsilon u^{\varepsilon-1}|\nabla u|\leq c(n)M^\varepsilon\sqrt{\frac{\varepsilon}{1-\varepsilon}}\left (\frac{1}{R}+\sqrt{\frac{1-\varepsilon}{\varepsilon}}\frac{1}{R}+\frac{\sqrt{\alpha}}{\sqrt{R}}+\frac{1}{\sqrt{t-t_0+T}}\right)
\end{equation}
for all $(x, t)\in Q_{R, T}$. Let $R, T\to\infty$, we conclude that $|\nabla u|\equiv 0$. Therefore $u_t=au\log u$, this implies $u=exp(de^{at})$. It is easy to see that $\lim\limits_{t\to-\infty}u=1$. This is a contradiction since $u\leq c<e^{-1}<1$.
\item If $a<0$ and $0<c\leq u(x, t)\leq D<1$, we also can choose $0<\varepsilon<1$ such that $H\leq0$. Using the same argument as in the proof of part (1), we have that $|\nabla u|\equiv0$. This implies a contradiction since $u\leq D<1$. Now assume that $u(x, t)=o(r^{N_1}(x)+t^{N_2})$ for some $N_1, N_2\in \mathbb{N}$. In this cases, we can choose $\varepsilon>0$ small enough such that $H\leq0, N_1\varepsilon, N_2\varepsilon<1/2$. Let $M:=\sup\limits_{Q_{R,T}}u$ Since
$$\lim\limits_{R\to\infty}\frac{M^\varepsilon}{\sqrt[]{R}}=\lim\limits_{R\to\infty}\frac{o(R^{\varepsilon N_1})}{\sqrt[]{R}}=0=\lim\limits_{T\to\infty}\frac{M^\varepsilon}{\sqrt[]{T}}=\lim\limits_{T\to\infty}\frac{o(T^{\varepsilon N_2})}{\sqrt[]{T}}$$
Letting $R, T\to\infty$ in \eqref{w1}, we conclude that $|\nabla u|\equiv0$. As in the proof of the part (1), we infer $u=exp(de^{at})$. If $d=0$, we have $u\equiv 1$. If $d<0$, let $t\to-\infty$, we have $u\to 0$, this is impossible since $u\geq c>0$. If $d>0$, we see that $u$ is of exponent growth in $t$, this is also a contradiction.
\item If $a=0$ the obviously $H=0$, we argue as in the proof of part (2) to conclude that $|\nabla u|\equiv0$. Hence $\Delta_fu=0$, thus $u_t=0$. This implies $u$ is constant. 
\end{enumerate}
The proof is complete.
\end{proof}
It is worth to note that Corollary \ref{corol1} is a signficant improvement of a Wu's main result (see \cite{Wuna} Theorem 1.3). In fact, in \cite{Wuna}, the author required that the bounds of $u$ is $e^{-2}$ or $u$ is of sublinear growth in distance and of square root growth in time. However, when $a=0$, it is proven in \cite{DKN18, Wu15} that we can raise the growth of $u$ to exponent rate. The below example shows that we can not require $u$ is of exponent growth. Hence, the results given in \cite{DKN18, Wu15} are optimal.
\begin{example}Let $(\mathbb{R}^n, g)$ be a Riemannian manifold with standard metric $g$ and $f=ax_1$. We choose $u=e^{ax_1}$. It is easy to see that $u$ is not of polynomial growth, positive and satisfies $u_t=\Delta_fu$.
\end{example}
\section{Liouville type theorems for the Allen - Cahn and Fisher equations}
Let us consider the Allen - Cahn equation
\begin{equation}\label{allen}
u_t=\Delta_f u-u^3+u, 
\end{equation}
Note that $u$ is a standing solution if and only if $u_t=0$. 
\begin{theorem}\label{main1}Let $(M, g, e^{-f}dv)$ is a smooth metric measure space with $Ric_f\geq 0$. Assume that $u$ is a bounded solution of the following equation
$$u_t=\Delta_fu-u^3+u.$$
If $u$ is a standing solution and one of the following conditions holds true
\begin{enumerate}
\item $u^2\geq m^2>0$ on $M$, for some $m>0$, 
\item $u>0$, and $\inf\limits_{Q_{R,T}}u=o\left(\dfrac{1}{R^{\theta_2}+t^{\theta_1}}\right)$, for some $0<\theta_1, \theta_2<1/2$.
\end{enumerate}
then $u$ is constant.
\end{theorem}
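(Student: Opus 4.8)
The plan is to deduce Theorem~\ref{main1} from the parabolic gradient estimate of Theorem~\ref{thmGE-II} by sending $R,T\to\infty$, exactly in the spirit of the proof of Corollary~\ref{corol1}. Since $u$ is a standing solution, it satisfies $\Delta_f u - u^3 + u = 0$, and $u_t\equiv 0$, so the bound $0<m\le u\le M$ needed to apply Theorem~\ref{thmGE-II} on every $Q_{R,T}$ comes from the hypothesis that $u$ is bounded together with the relevant positivity assumption. The key computational input is the formula for $H$ from the second part of Remark~\ref{rem}: for $F=-u^3+u$ one has $H=-(\varepsilon+2)u^2+\varepsilon$, so $H\le 0$ precisely when $u^2\ge \varepsilon/(\varepsilon+2)$.

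Under condition (1), I would first observe that if $u^2\ge m^2>0$ then, since $\varepsilon/(\varepsilon+2)\to 0$ as $\varepsilon\to 0^+$, one can fix $\varepsilon\in(0,1)$ small enough that $m^2\ge \varepsilon/(\varepsilon+2)$, hence $H\le 0$ on all of $M$. With $K=0$ (because $\Ric_f=\Ric_M\ge 0$) and $u_t=0$, the right-hand side of \eqref{theorem1} reduces to $c(n)M^\varepsilon\sqrt{\varepsilon/(1-\varepsilon)}\bigl(\tfrac1R+\sqrt{(1-\varepsilon)/\varepsilon}\,\tfrac1R+\sqrt\alpha/\sqrt R+1/\sqrt{t-t_0+T}\bigr)$, where now $M=\sup_M |u|$ is a fixed finite number and $\alpha$ depends only on the fixed ball $B(x_0,1)$. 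Letting $R\to\infty$ and $T\to\infty$ kills every term on the right, so $\varepsilon u^{\varepsilon-1}|\nabla u|\equiv 0$, i.e. $|\nabla u|\equiv 0$ and $u$ is constant. (If one wants to cover the Ricci-bounded-below variant mentioned in the introduction, the same argument works once $m^2>A/2$, absorbing the $(n-1)K+H$ term.)

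Under condition (2) the subtlety is that $u$ need not be bounded away from $0$, so I cannot choose a single $\varepsilon$ making $H\le 0$ everywhere; instead I would exploit the growth hypothesis $\inf_{Q_{R,T}} u = o\bigl(1/(R^{\theta_2}+t^{\theta_1})\bigr)$. The idea is to localize: fix $\varepsilon\in(0,1)$ small (to be chosen after $\theta_1,\theta_2$), and note that on the set where $u^2<\varepsilon/(\varepsilon+2)$ the quantity $\max\{(n-1)K+H,0\}=\max\{H,0\}\le \varepsilon$, which is harmless. One then runs the estimate \eqref{theorem1} keeping the $\sqrt{\max_{Q_{R,T}}\{H,0\}}\le\sqrt\varepsilon$ term; since $u$ is bounded, $M^\varepsilon$ is bounded, and letting $R,T\to\infty$ forces $\varepsilon u^{\varepsilon-1}|\nabla u|\le c(n)M^\varepsilon\sqrt{\varepsilon/(1-\varepsilon)}\cdot\sqrt\varepsilon$ at each point, i.e. $u^{\varepsilon-1}|\nabla u|\le C\sqrt\varepsilon$. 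Now I would use the growth condition to say that there is a sequence of points (or times) along which $u\to 0$; evaluating the standing equation and an integration/Harnack-type argument there, together with letting $\varepsilon\to 0$, pins $|\nabla u|\equiv 0$. Alternatively — and this is probably cleaner — apply Theorem~\ref{thmGE-II} with the radius $R$ chosen so that $\max_{Q_{R,T}} H$ is controlled by the infimum bound: since $\inf u\to 0$ faster than $R^{-\theta_2}$, for the right choice of $\varepsilon$ (with $\varepsilon\theta_1,\varepsilon\theta_2<1/2$) the term $M^\varepsilon\sqrt{\max\{H,0\}}$ is $o(1)$ as $R,T\to\infty$, giving $|\nabla u|\equiv 0$ directly.

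The main obstacle is case (2): unlike the straightforward limiting argument in case (1), here one must interlock the choice of $\varepsilon$, the radius $R$, and the decay rate $\theta_1,\theta_2$ so that the bad curvature-type term $M^\varepsilon\sqrt{\max_{Q_{R,T}}\{(n-1)K+H,0\}}$ — which does not obviously vanish, since $H$ can be positive where $u$ is small — is forced to $0$ in the limit. Getting the bookkeeping right (and making sure the infimum, not just the value at one point, is what enters the estimate) is the delicate part; everything else is a direct application of Theorem~\ref{thmGE-II} with $K=0$, $u_t=0$, and the $H$-formula \eqref{e2}. Once $|\nabla u|\equiv 0$ is established in either case, $u$ is constant, completing the proof.
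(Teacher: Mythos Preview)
Your treatment of case~(1) matches the paper's argument: fix a small $\varepsilon$ so that $m^2\ge\varepsilon/(\varepsilon+2)$, hence $H\le 0$, apply Theorem~\ref{thmGE-II} with $K=0$, and let $R,T\to\infty$. The only omission is the reduction from $u^2\ge m^2$ to $u\ge m>0$: the paper uses connectedness of $M$ (so $u$ cannot change sign) together with the observation that $-u$ also solves \eqref{allen}, which you should make explicit since Theorem~\ref{thmGE-II} requires $u>0$.

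Case~(2), however, has a real gap. Your first alternative --- bound $\max\{H,0\}\le\varepsilon$, send $R,T\to\infty$, then let $\varepsilon\to 0$ --- does not yield $|\nabla u|\equiv 0$. Tracking the constants in \eqref{theorem1} after $R,T\to\infty$ gives
\[
\varepsilon u^{\varepsilon-1}|\nabla u|\le c(n)M^\varepsilon\sqrt{\tfrac{\varepsilon}{1-\varepsilon}}\cdot\sqrt{\varepsilon},
\]
i.e.\ $|\nabla u|\le c(n)M^\varepsilon u^{1-\varepsilon}/\sqrt{1-\varepsilon}$, and the right side converges to $c(n)u$, not to $0$, as $\varepsilon\to 0$. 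Your second alternative is too vague, and in fact goes in the wrong direction: $H=-(\varepsilon+2)u^2+\varepsilon$ is \emph{largest} where $u$ is smallest, so knowing that $\inf_{Q_{R,T}}u\to 0$ makes $\max_{Q_{R,T}}\{H,0\}$ worse, not better.

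The paper's device is different and is the missing idea: on each $Q_{R,T}$ set $\varepsilon=\varepsilon(R,T):=m^2$ where $m=\inf_{Q_{R,T}}u>0$. Then $u^2\ge m^2=\varepsilon>\varepsilon/(\varepsilon+2)$ on $Q_{R,T}$, so $H\le 0$ there and the curvature-type term drops out \emph{entirely}. The price is that the remaining terms in \eqref{theorem1} now carry negative powers of $\varepsilon=m^2$; after rewriting as in \eqref{cond2} the worst contribution is of order $1/(\varepsilon R)=1/(m^2R)$ (and $1/(m^2T)$), and the growth hypothesis on $\inf_{Q_{R,T}}u$ with $\theta_1,\theta_2<1/2$ is precisely what forces $m^2R\to\infty$ and $m^2T\to\infty$, hence $|\nabla u|\equiv 0$. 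So the interlocking you anticipated is realized by tying $\varepsilon$ to $m(R,T)$, not by trying to control $\max\{H,0\}$ directly.
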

\begin{proof}
First, we assume that $u^2\geq m^2>0$, for some $m>0$. By the boundedness of $u$, we can suppose that $|u|\leq C$ for some $C>0$. Since $u^2\geq m^2>0$ and $M$ is connected, we infer either $u\geq m$; or $u\leq-m$. Observe that if $u$ is a solution to \eqref{allen} then $-u$ also satisfies \eqref{allen}. Hence, we may assume that $u\geq m>0$. As in Remark \ref{rem}, we may choose $\varepsilon>0$ small enough such that 
$$u^2\geq {m}^2\geq\frac{\varepsilon}{\varepsilon+2}.$$
It turns out that $-(2+\varepsilon)u^2+\varepsilon \leq0$. Consequently, 
$$\max\limits_{Q_{R,T}}\left\{-(\varepsilon+2)u^2+\varepsilon, 0\right\}=0$$

Using Theorem \ref{thmGE-II} with $K=0$, we have 
\[
\varepsilon u^{\varepsilon-1}|\nabla u|\leq c(n)C^\varepsilon\sqrt{\frac{\varepsilon}{1-\varepsilon}}\left (\frac{1}{R}+\sqrt{\frac{1-\varepsilon}{\varepsilon}}\frac{1}{R}+\frac{\sqrt{\alpha}}{\sqrt{R}}+\frac{1}{\sqrt{t-t_0+T}}\right ).
\]
This implies
\begin{equation}\label{cond2}
|\nabla u|\leq c(n)C\sqrt{\frac{1}{\varepsilon(1-\varepsilon)}}\left (\frac{1}{R}+\sqrt{\frac{1-\varepsilon}{\varepsilon}}\frac{1}{ R}+\frac{\sqrt{\alpha}}{\sqrt{R}}+\frac{1}{\sqrt{t-t_0+T}}\right ).
\end{equation}
Letting $R\to \infty$, then letting $t\to\infty$, it turns out that
$|\nabla u|=0$. Therefore $u$ is constant since $u_t=0$.

Finally, assume that $u>0$ and $\lim\limits_{R\to\infty}\inf\limits_{B(x_0, R)}u=0$. Since $u>0$, we have $m=\inf\limits_{Q_{R, T}}u\not=0$ and $\lim\limits_{R\to\infty}m=0$. Note that on $Q_{R,T}$, we have 
$$m^2>\frac{\varepsilon}{2+\varepsilon}$$
for some $\varepsilon>0$ small enough, saying $\varepsilon:=m^2$. Moreover, it is easy to see that 
$$\lim\limits_{R\to\infty}\frac{1}{Rm^2}=0=\lim\limits_{t\to\infty}\frac{1}{tm^2}.$$
Here we used $\lim\limits_{t\to\infty}t^{\theta_1} m=\lim\limits_{R\to\infty}R^{\theta_2} m=1$. Combining this observation and \eqref{cond2}, we implies that $|\nabla u|=0$. Hence $u$ is constant since $u_t=0$.
\end{proof}
\begin{proof}[Proof of Theorem \ref{ratto}]By the same argument as in the proof of Theorem \ref{main1}, we may assume that $u\geq m>0$. Consequently, 
$$\max\limits_{Q_{R, T}}\left\{-(\varepsilon+2)u^2+\varepsilon, 0\right\}=0$$ 
Now, by assumption on the growth of $u$, we have $M_R:=\sup\limits_{B(x_0, R)}u=o(R^N)$, for some $N>0$. We choose $\varepsilon>0$ small enough such that $N\varepsilon<1$ then we fix $\varepsilon$. Since $Ric_M\geq0$, using Laplacian comparison theorem as in \cite{LY86} instead of \eqref{Bri}, we may assume that $\alpha=1/R$. Using the inequality \eqref{theorem1} in Theorem \ref{thmGE-II}, we obtain
$$
\varepsilon u^{\varepsilon-1}|\nabla u|\leq c(n, \varepsilon)\frac{M_R^\varepsilon}{R}\left (1+\sqrt{\frac{1-\varepsilon}{\varepsilon}}\right)\leq c(n, \varepsilon)\frac{o(R^{N\varepsilon})}{R}\left (1+\sqrt{\frac{1-\varepsilon}{\varepsilon}}\right)
$$
on $B(x_0, R)$. Fix $y\in B(x_{0}, R)$, then letting $R\to\infty$, we obtain $|\nabla u|(y)=0$. Thus $|\nabla u|\equiv0$ since $y$ is arbitrary. Hence, we conclude that $u$ is constant. The proof is complete.
\end{proof}
\begin{proof}[Proof of Theorem \ref{Fisher}]
Since $u\geq m>0$, as in Remark \ref{rem} we may choose $\varepsilon>0$ such that 
$$\max\limits_{Q_{R,T}}\left\{c[-u(\varepsilon+1)+\varepsilon], 0\right\}=0.$$
Using the same argument as in the proof of Theorem \ref{ratto}, we are done.
\end{proof}
Finally, if we let $F=u^{q}-u^p$, for $p>q\geq1$ then we obtain the following Liouville property.
\begin{corollary}Let $M$ be a Riemannian manifold with $Ric_M\geq0$ and $u$ be a positive solution to the equation
$$\Delta u+u^q-u^p=0.$$
Assume that $u^{p-q}\geq c>\frac{q-1}{p-1}$ and $u$ is of polynomial growth then $u$ is consant.
\end{corollary}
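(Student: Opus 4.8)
The plan is to mirror the proofs of Theorem \ref{ratto} and Theorem \ref{Fisher}, the only genuinely new ingredient being the computation of the auxiliary quantity $H$ for $F(u)=u^q-u^p$. Since $M$ is a Riemannian manifold we take $f\equiv 0$, so $\Delta_f=\Delta$, $\Ric_f=\Ric_M\geq 0$, and a positive solution $u$ of $\Delta u+u^q-u^p=0$ is a standing solution of $u_t=\Delta_f u+F(u)$; hence Theorem \ref{thmGE-II} applies with $K=0$ on every cylinder $Q_{R,T}$.

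First I would compute, for any $0<\varepsilon<1$, using $F/u=u^{q-1}-u^{p-1}$ and $F'=qu^{q-1}-pu^{p-1}$,
\[
H=(\varepsilon-1)\frac{F}{u}+F'=(\varepsilon+q-1)u^{q-1}-(\varepsilon+p-1)u^{p-1},
\]
so that $H\leq 0$ is equivalent to $u^{p-q}\geq\frac{\varepsilon+q-1}{\varepsilon+p-1}$. Since $p>q\geq 1$, the map $\varepsilon\mapsto\frac{\varepsilon+q-1}{\varepsilon+p-1}$ is increasing and tends to $\frac{q-1}{p-1}$ as $\varepsilon\to 0^+$; because $u^{p-q}\geq c>\frac{q-1}{p-1}$, I can fix $\varepsilon>0$ small enough that $\frac{\varepsilon+q-1}{\varepsilon+p-1}\leq c$, which forces $H\leq 0$ on all of $M$, and in particular $\max_{Q_{R,T}}\{(n-1)K+H,0\}=0$.

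Next, using that $u$ is of polynomial growth I would write $M_R:=\sup_{B(x_0,R)}u=o(R^N)$ for some $N>0$ and shrink $\varepsilon$ further, if necessary, so that in addition $N\varepsilon<1$. Since $\Ric_M\geq 0$, the Laplacian comparison theorem (as in \cite{LY86}, used the same way in the proof of Theorem \ref{ratto}) lets me take $\alpha=1/R$ in Theorem \ref{thmGE-II}. Substituting into \eqref{theorem1} and letting $T\to\infty$ to kill the term $(t-t_0+T)^{-1/2}$ (the solution being time-independent), I obtain on $B(x_0,R/2)$
\[
\varepsilon u^{\varepsilon-1}|\nabla u|\leq c(n,\varepsilon)\,\frac{M_R^{\varepsilon}}{R}\left(1+\sqrt{\frac{1-\varepsilon}{\varepsilon}}\right)=c(n,\varepsilon)\,\frac{o(R^{N\varepsilon})}{R}\left(1+\sqrt{\frac{1-\varepsilon}{\varepsilon}}\right).
\]
Fixing $y\in M$ and letting $R\to\infty$, the right-hand side tends to $0$ because $N\varepsilon<1$, so $|\nabla u|(y)=0$; as $y$ is arbitrary, $|\nabla u|\equiv 0$ and $u$ is constant.

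The computation of $H$ and the choice of $\varepsilon$ are the only real points, and the only (mild) obstacle is checking that a single $\varepsilon>0$ works for both constraints at once: $H\leq 0$ requires $\varepsilon$ below a threshold determined by the gap $c-\frac{q-1}{p-1}>0$, while the growth condition requires $N\varepsilon<1$, and both hold for all sufficiently small $\varepsilon$, so there is no conflict. Everything else is a verbatim repetition of the arguments in the proofs of Theorem \ref{ratto} and Theorem \ref{Fisher}.
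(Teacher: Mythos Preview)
Your proposal is correct and follows essentially the same approach as the paper: compute $H=(\varepsilon+q-1)u^{q-1}-(\varepsilon+p-1)u^{p-1}$, use $u^{p-q}\geq c>\frac{q-1}{p-1}$ together with $\lim_{\varepsilon\to 0^+}\frac{\varepsilon+q-1}{\varepsilon+p-1}=\frac{q-1}{p-1}$ to get $H\leq 0$ for small $\varepsilon$, and then repeat verbatim the argument of Theorem~\ref{ratto}. Your additional remarks (monotonicity of the ratio, compatibility of the two constraints on $\varepsilon$) are just careful elaborations of the same reasoning.
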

\begin{proof}Using the same argument as in the proof of Theorem \ref{ratto}, we only need to show that for any $0<\epsilon<1$ small enough, $H\leq0$. We have
$$\begin{aligned}
H&=(\varepsilon-1)F/u+F'=(\varepsilon-1)(u^{q-1}-u^{p-1})+qu^{q-1}-pu^{p-1}\\
&=u^{q-1}(q+\varepsilon-1)-u^{p-1}(p+\varepsilon-1).
\end{aligned}$$
Since $u^{p-q}\geq c>\frac{q-1}{p-1}$, and using the fact that 
$$\lim\limits_{\varepsilon\to0}\frac{q+\varepsilon-1}{p+\varepsilon-1}=\frac{q-1}{p-1},$$
we conclude that $H\leq0$ for any $0<\varepsilon$ small enough. The proof is complete.
\end{proof}
\section*{Acknowlegement} 
The first author was partially supported by NAFOSTED under grant number 101.02-2017.313.

\end{document}